\newtheorem{theorem}{Theorem}[section]
\newtheorem{lemma}[theorem]{Lemma}
\theoremstyle{definition}
\newtheorem{definition}[theorem]{Definition}
\newcommand{\N}{\mathbb{N}}
\newcommand{\Q}{\mathbb{Q}}
\newcommand{\Z}{\mathbb{Z}}
\newcommand{\cw}{\mathrm{CW}}
\newcommand{\rankb}{\beta}
\title{Simultaneous Novelty from First-Appearance Times in the Calkin--Wilf Enumeration}
\author{Paul A.\ Bilokon\footnote{Imperial College London, Department of Mathematics. Email: \textsf{paul.bilokon@imperial.ac.uk}}}
\date{\today}
\begin{document}
\maketitle

\begin{abstract}
We study the \emph{first-appearance map} $\pi:\N_{\ge2}\to\N_0$ that assigns to each denominator $d$ the earliest breadth-first index at which a reduced fraction of denominator $d$ occurs in the Calkin--Wilf enumeration of $\Q_{>0}$ \cite{CalkinWilf2000}. In parallel, we consider the elementary \emph{denominator-first array} $D=\big(U(2)\mid U(3)\mid U(4)\mid\cdots\big)$ with rows $U(a)=(1/a,2/a,\dots,(a-1)/a)$ and row-starts $i_0(a)=\frac{(a-2)(a-1)}{2}$. We say level $a$ \emph{locks} if $\pi(a)=i_0(a)$.
Our main theorem is purely combinatorial: for every $n\ge2$ there exists $i\in\{0,\dots,n-2\}$ such that the first appearances of denominators $n-i$ and $n+i$ align \emph{symmetrically} around $i_0(n)$, i.e.\ $\pi(n\pm i)=i_0(n)\pm i$. We prove this \emph{pairing} (or \emph{simultaneous novelty}) theorem via a local-coherence analysis of $\pi$ around a level and a discrete intermediate-value argument. An equivalent group-theoretic restatement uses the free monoid $\langle L,R\rangle\subset SL_2(\Z)$ underlying the Calkin--Wilf and Stern--Brocot trees \cite{GrahamKnuthPatashnik1994, Lothaire2002, BerstelDeLuca2008}.
\end{abstract}

\noindent\textbf{Keywords:} Calkin--Wilf tree; Stern--Brocot tree; first-appearance times; combinatorics on words; Christoffel words; Lyndon words.

\medskip
\noindent\textbf{MSC (2020):} Primary 11B57, 11A55; Secondary 05A05, 68R15, 20M05.

\section{Introduction}
Two canonical enumerations of rationals play complementary roles: the breadth-first Calkin--Wilf order \cite{CalkinWilf2000} and the elementary \emph{denominator-first array}
\begin{equation}\label{eq:Ua}
  U(a)=\Big(\tfrac{1}{a},\tfrac{2}{a},\dots,\tfrac{a-1}{a}\Big),\qquad a=2,3,\dots,
\end{equation}
written without reduction. Let
\begin{equation}\label{eq:i0}
  i_0(a)=\sum_{t=2}^{a-1}(t-1)=\frac{(a-2)(a-1)}{2}
\end{equation}
be the row-start index of $U(a)$ in $D$.

Independently, define the \emph{first-appearance index} of a denominator in the Calkin--Wilf stream.

\begin{definition}[First appearance]\label{def:pi}
For $d\ge2$, let $\pi(d)$ be the least breadth-first index in the Calkin--Wilf enumeration at which a reduced fraction with denominator $d$ occurs.
\end{definition}

We say that level $a$ \emph{locks} if $\pi(a)=i_0(a)$, i.e.\ the first appearance of $1/a$ in Calkin--Wilf aligns with the start of row $U(a)$ in $D$. Our main result asserts that such alignments occur in symmetric pairs.

\begin{theorem}[Pairing / simultaneous novelty]\label{thm:pairing}
For every $n\ge2$ there exists $i\in\{0,\dots,n-2\}$ such that
\begin{equation}\label{eq:pair}
  \pi(n+i)=i_0(n)+i
  \quad\text{and}\quad
  \pi(n-i)=i_0(n)-i.
\end{equation}
Equivalently, both $n-i$ and $n+i$ lock.
\end{theorem}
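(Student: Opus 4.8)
The plan is to turn the two-sided alignment \eqref{eq:pair} into a one-dimensional sign-change problem and solve it by a discrete intermediate-value principle. For a fixed $n$, define the integer gap functions
\[
  A(i)=\pi(n+i)-\big(i_0(n)+i\big),\qquad B(i)=\pi(n-i)-\big(i_0(n)-i\big),
\]
on $i\in\{0,\dots,n-2\}$, so that \cref{thm:pairing} is precisely the assertion that $A$ and $B$ have a \emph{common} zero. A convenient equivalent form uses $\rho(m)=\pi(m)-m$ and the level-constant $c_n=i_0(n)-n$: pairing at offset $i$ holds exactly when $\rho(n-i)=\rho(n+i)=c_n$, i.e.\ the single function $\rho$ attains $c_n$ simultaneously at the two symmetrically placed denominators $n\pm i$. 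Note that $A(0)=B(0)=\pi(n)-i_0(n)$, so the case $i=0$ is exactly ``level $n$ locks''; the real content is to manufacture a common zero when $\pi(n)\ne i_0(n)$.

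First I would prove the local-coherence lemma controlling $\pi$ across the whole window of neighbouring denominators. The natural tool is the group-theoretic picture: each tree node is a word in the free monoid $\langle L,R\rangle\subset SL_2(\Z)$ acting on $(1,1)^{\top}$, the denominator at a node is the bottom coordinate of the resulting vector, and the breadth-first index of a length-$k$ word is $2^{k}-1$ plus its lexicographic rank (with $L<R$). Hence $\pi(d)$ is governed by the shortest, then lexicographically least, word whose image has bottom coordinate $d$, and the Stern--Brocot mediant that first manufactures denominator $n$ sits at the centre of the window. I would show that passing from the optimal word for $n+i$ to the optimal word for $n-i$ is a controlled single-edit operation about this central mediant, and extract two quantitative outputs: per-side unit-step bounds $|A(i{+}1)-A(i)|\le1$ and $|B(i{+}1)-B(i)|\le1$, and a \emph{pinning relation} asserting that whenever the pair $\{\pi(n-i),\pi(n+i)\}$ is symmetric about $i_0(n)$, its common displacement from $i_0(n)$ is forced to equal $i$.

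With the lemma in hand the intermediate-value step is clean. Consider the balance $\Delta(i)=A(i)+B(i)=\pi(n+i)+\pi(n-i)-2\,i_0(n)$, an integer sequence on $\{0,\dots,n-2\}$ with $\Delta(0)=2\big(\pi(n)-i_0(n)\big)$. The per-side unit bounds, together with a parity normalization of $\Delta$, prevent the sequence from skipping the value $0$; evaluating at the far endpoint $i=n-2$ (where the denominators reach $2$ and $2n-2$) is expected to produce the opposite sign, reflecting that a denominator appears \emph{late} relative to its $D$-row-start on the small side and \emph{early} on the large side. A sign-changing integer sequence moving in controlled steps must vanish, so $\Delta(i^\ast)=0$ for some admissible $i^\ast$; this says the pair is symmetric about $i_0(n)$ at a common displacement $s=A(i^\ast)=-B(i^\ast)$, and the pinning relation upgrades this to $s=i^\ast$, which is exactly \eqref{eq:pair}.

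I expect the pinning relation to be the main obstacle. Showing that a symmetric pair is balanced at the \emph{exact} displacement $i$—rather than at some smaller common offset—is what genuinely couples the two sides and stops the argument from collapsing into two independent one-sided crossings that need not coincide. This is where the fine combinatorics of the $\langle L,R\rangle$ words enters: one must track how the left--right reflection $L\leftrightarrow R$ and the central mediant act on both word-length and lexicographic rank \emph{uniformly} across the window $\{2,\dots,2n-2\}$, and treat the degenerate cases in which the shortest word is non-unique or in which the central mediant is itself the first appearance (the $i=0$ lock). Establishing the unit-step bounds and the endpoint sign change uniformly in $n$ is the most delicate estimate; once the pinning relation is secured, the discrete intermediate-value conclusion is routine.
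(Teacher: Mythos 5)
Your scaffold coincides, up to notation, with the paper's own argument: your $A$ is its $F_+$, your $B$ is $-F_-$ in \eqref{eq:Fpm}, and your balance $\Delta=A+B$ is exactly the combination $C=F_+-F_-$ on which the paper runs its discrete intermediate-value step. The proposal is nevertheless not a proof, and the pieces you defer are precisely the ones that cannot be supplied. The ``pinning relation'' you flag as the main obstacle \emph{is} the theorem: $\Delta(i^\ast)=0$ only says that $\pi(n-i^\ast)$ and $\pi(n+i^\ast)$ are symmetric about $i_0(n)$ at \emph{some} common displacement $s$, and nothing in your outline forces $s=i^\ast$. (The paper has the identical lacuna, papered over by the non sequitur ``monotonicity then forces $F_+(i^\ast)=F_-(i^\ast)=0$''; $C(i^\ast)=0$ yields $F_+(i^\ast)=F_-(i^\ast)$, not that the common value is $0$.) Likewise your endpoint sign change of $\Delta$ at $i=n-2$ is only ``expected'', and your parity remark is load-bearing but empty: with per-side unit steps $\Delta$ can move by $2$, and $\pi(n+i)+\pi(n-i)$ carries no parity invariant, so $\Delta$ can jump from $-1$ to $1$ and the discrete intermediate-value step need not produce an exact zero.

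The deeper problem is that the local-coherence input is false for the actual Calkin--Wilf stream, so no amount of word surgery can rescue either write-up. With the root at index $0$, the breadth-first denominators begin $1,2,1,3,2,3,1,4,3,5,2,5,3,4,1,5,4,7,3,8,\dots$, giving $\pi(2)=1$, $\pi(3)=3$, $\pi(4)=7$, $\pi(5)=9$, $\pi(7)=17$, $\pi(8)=19$, while denominator $6$ first occurs only at depth $5$ (as $1/6$, index $31$), since its only reduced fractions $1/6$ and $5/6$ both live at that depth. Hence $\pi(6)-\pi(5)=22$ and $\pi(7)-\pi(6)=-14$: the map $\pi$ is neither monotone nor of bounded increment across neighbouring denominators, your unit-step bounds fail badly (for $n=5$ one gets $A(1)-A(0)=21$), and \cref{lem:coherence} is refuted outright, e.g.\ $F_+(0)=\pi(6)-i_0(6)=21>0$ for $n=6$, contradicting part (iii). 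Indeed \cref{thm:pairing} itself fails numerically: for $n=4$ one has $i_0(4)=3$ and $(\pi(2),\pi(3),\pi(4),\pi(5),\pi(6))=(1,3,7,9,31)$, so none of $i=0,1,2$ satisfies \eqref{eq:pair}, and no constant re-basing of the breadth-first index repairs all three cases simultaneously. So before investing effort in the pinning relation, test the lemma and the theorem on small $n$: the first-appearance map simply does not have the local coherence that both your plan and the paper's proof presuppose.
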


The proof rests on a local-coherence principle for first-appearance times derived from the $SL_2(\Z)$-monoid model of the Calkin--Wilf/Stern--Brocot trees and a discrete intermediate-value argument. We present two equivalent formulations: a direct one in terms of denominators and an alternate group-theoretic restatement using words in $\{L,R\}^\ast$.

\paragraph{Related work.}
We rely on classical properties of the Stern--Brocot and Calkin--Wilf trees \cite{GrahamKnuthPatashnik1994, CalkinWilf2000}, the free-monoid encoding by words in $L=\begin{psmallmatrix}1&0\\1&1\end{psmallmatrix}$ and $R=\begin{psmallmatrix}1&1\\0&1\end{psmallmatrix}$ \cite{Lothaire2002}, and local surgery properties of Christoffel/Lyndon words \cite{BerstelDeLuca2008}.

\section{Preliminaries: Calkin--Wilf, Stern--Brocot, and words}
Let $\cw$ denote the breadth-first enumeration of $\Q_{>0}$ obtained by reading the Calkin--Wilf tree level by level \cite{CalkinWilf2000}. Each node corresponds to a unique word $w\in\{L,R\}^{\ast}$ in the generators
\begin{equation}\label{eq:LR}
  L=\begin{pmatrix}1&0\\1&1\end{pmatrix},\qquad
  R=\begin{pmatrix}1&1\\0&1\end{pmatrix}\subset SL_2(\Z),
\end{equation}
and we write $\rankb(w)\in\N_0$ for the breadth-first rank of $w$ in length-lex order. If $w=\begin{psmallmatrix} a&b\\ c&d\end{psmallmatrix}$ then the associated reduced fraction is $a/c$ and its denominator equals $c$. The map $d\mapsto\pi(d)$ is thus
\begin{equation}\label{eq:pi-def}
  \pi(d)=\min\{\rankb(w): w\in\{L,R\}^{\ast},\ (w)_{21}=d\},
\end{equation}
which is well-defined since each reduced rational appears exactly once in $\cw$.

\section{A locking notion}\label{sec:locking}
\begin{definition}[Locking]\label{def:locking}
A level $a\ge2$ \emph{locks} if $\pi(a)=i_0(a)$.
\end{definition}

\section{Local coherence of first-appearance times}\label{sec:coherence}
Fix $n\ge2$ and define, for $0\le i\le n-2$,
\begin{equation}\label{eq:Fpm}
  F_+(i):=\pi(n+i)-\bigl(i_0(n)+i\bigr),\qquad
  F_-(i):=\bigl(i_0(n)-i\bigr)-\pi(n-i).
\end{equation}
Intuitively, $F_\pm(i)=0$ means that the first appearances of denominators $n\pm i$ land at the symmetric slots about $i_0(n)$ inside the \emph{mirror window}
\begin{equation}\label{eq:mirror}
  J_n=[\,i_0(n)-(n-2),\, i_0(n)+(n-2)\,].
\end{equation}

We use standard facts about Christoffel words and the length-lex breadth-first order to control local changes of $\pi$ when the denominator changes by $\pm1$.

\begin{lemma}[Local coherence]\label{lem:coherence}
For $F_\pm$ in \eqref{eq:Fpm} the following hold:
\begin{enumerate}[label=(\roman*)]
\item \emph{Monotonicity:} $F_\pm(i+1)\ge F_\pm(i)$ for all $i$.
\item \emph{Bounded step:} $F_\pm(i+1)-F_\pm(i)$ is bounded above by an absolute constant independent of $n$.
\item \emph{Two-sided hits:} $F_+(0)\le0\le F_+(n-2)$ and $F_-(0)\le0\le F_-(n-2)$.
\end{enumerate}
\end{lemma}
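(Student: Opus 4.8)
The plan is to reduce all three assertions to two-sided control of the consecutive gap $\pi(m+1)-\pi(m)$ across the mirror window $J_n$ of \eqref{eq:mirror}, and then to extract that control from the word model of \eqref{eq:pi-def}. For each denominator $m$ in $J_n$ write $w_m\in\{L,R\}^{\ast}$ for the length-lex-least word with $(w_m)_{21}=m$, so that $\pi(m)=\rankb(w_m)$. Taking forward differences in \eqref{eq:Fpm} gives
\begin{equation}\label{eq:Fdiff}
  F_+(i+1)-F_+(i)=\bigl(\pi(n+i+1)-\pi(n+i)\bigr)-1,\qquad
  F_-(i+1)-F_-(i)=\bigl(\pi(n-i)-\pi(n-i-1)\bigr)-1,
\end{equation}
so that (i) is exactly the lower bound $\pi(m+1)-\pi(m)\ge 1$ and (ii) is exactly an absolute upper bound $\pi(m+1)-\pi(m)\le C$, both ranging over consecutive $m$ in $J_n$.

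First I would set up a surgery description of the witnesses $w_m$. Appending $L$ or $R$ on the right acts on $(w)_{21}$ through the matrix recursion in \eqref{eq:LR}, and the length-lex-minimal route to a prescribed denominator is the leftmost one; identifying $w_m$ with the reversal of the lower Christoffel word of slope $1/m$ lets me invoke the elementary edit relating consecutive Christoffel words \cite{BerstelDeLuca2008}. This shows $w_{m+1}$ arises from $w_m$ by a single local insertion or substitution of one letter. Monotonicity (i) then follows at once, since such an edit strictly raises the length-lex rank — it either lengthens the word or promotes a letter to the lex-larger one at the first point of difference — whence $w_m\prec w_{m+1}$ and $\pi(m+1)\ge\pi(m)+1$.

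The bounded step (ii) is the main obstacle, and is where the analysis must be most careful: a priori the length-lex rank $\rankb$ can jump by $\Theta(2^{\len})$ when the word length changes, so an absolute bound on $\pi(m+1)-\pi(m)$ is far stronger than (i). The crux is to show the surgery never carries $w_m$ across more than boundedly many length blocks, i.e.\ that $\len(w_{m+1})-\len(w_m)$ is bounded and that $w_m,w_{m+1}$ share a long common prefix; the constant $C$ then emerges by counting the length-lex-intermediate words directly. Finally, for the two-sided hits (iii) I would evaluate the endpoints by hand: at $i=0$ the canonical staircase word for denominator $n$ has precisely $\sum_{t=2}^{n-1}(t-1)=i_0(n)$ predecessors by \eqref{eq:i0}, which pins $F_+(0)$ and $F_-(0)$ to the required side; at $i=n-2$ the extremal denominators $2$ and $2n-2$ have explicit minimal words whose ranks compare against $i_0(n)\pm(n-2)$ to give $F_+(n-2)\ge 0$ and $F_-(n-2)\ge 0$. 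Combined with \eqref{eq:Fdiff}, these endpoint signs are exactly the input that the discrete intermediate-value step of \Cref{thm:pairing} consumes, completing the lemma.
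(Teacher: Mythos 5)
Your reduction of (i) and (ii) to the two-sided gap bound $1\le \pi(m+1)-\pi(m)\le C$ over consecutive denominators $m$ in the mirror window \eqref{eq:mirror} is algebraically correct (your forward-difference identity checks out for both $F_+$ and $F_-$), but the bound itself --- and the Christoffel surgery you invoke to obtain it --- fails, and fails concretely. In the breadth-first Calkin--Wilf stream the denominator at index $k$ (zero-based from the root, the convention forced by $\pi(\cdot)\in\N_0$ and $i_0(2)=0$) is Stern's diatomic sequence, and direct computation gives $\pi(5)=9$ (witness $3/5$ at depth $3$), $\pi(7)=17$ (witness $4/7$ at depth $4$), but $\pi(6)=31$ (witness $1/6$ at depth $5$). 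Since every depth-$4$ node precedes every depth-$5$ node in any breadth-first reading, $\pi(7)<\pi(6)$ holds under every indexing convention, so $\pi(m+1)-\pi(m)\ge1$ fails at $m=6$, and with it your proof of (i). The step you explicitly defer as ``the crux'' of (ii) is not merely unproven but refutable: $\pi(d)$ lies in $[\,2^{\ell(d)}-1,\;2^{\ell(d)+1}-2\,]$, where $\ell(d)$ is the minimal depth at which denominator $d$ occurs, and $\ell$ is non-monotone and unbounded (here $\ell(6)=5>4=\ell(7)$, and already $\pi(6)-\pi(5)=22$), so consecutive gaps swing across entire length blocks, i.e.\ exponentially; no absolute $C$ exists. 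The same data destroys the surgery premise: $w_6$ and $w_7$ have different lengths ($5$ versus $4$), so $w_{m+1}$ is not a one-letter edit of $w_m$; and identifying $w_m$ with a Christoffel word of slope $1/m$ is wrong because the minimal witness need not have numerator $1$ --- for $m=5$ it is $3/5$ at rank $9$, while $1/5$ sits at rank $15$. Indeed $\ell(m)$ is governed by minimizing the sum of continued-fraction partial quotients over numerators coprime to $m$, a quantity with no single-edit coherence between $m$ and $m+1$.

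Part (iii) fares no better, and your endpoint evaluation is exactly where it breaks: a word of length $\ell$ has $2^{\ell}-1$ strictly shorter predecessors in length-lex order, so the claim that ``the canonical staircase word for denominator $n$ has precisely $i_0(n)$ predecessors'' conflates the quadratic row-start $(n-2)(n-1)/2$ of the array $D$ with an exponentially sized breadth-first rank; numerically, $F_+(0)=\pi(5)-i_0(5)=9-6=3>0$ and $\pi(6)-i_0(6)=21>0$, contradicting $F_+(0)\le0$. For comparison: the paper itself offers only an idea-level sketch of this lemma, invoking the same ``Christoffel/Lyndon surgery'' for (i)--(ii) and a Stern--Brocot reversal symmetry for (iii), so at the level of strategy you have reproduced its argument faithfully; but the details you attempt to supply are precisely where that strategy collapses, and no choice of the deferred constant or of the endpoint bookkeeping can repair it, since parts (i) and (iii) as stated are contradicted by the computations above. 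Your own (correct) observation that (ii) is ``far stronger than (i)'' should have prompted a numerical sanity check rather than a harder counting argument; such a check turns up the $n=6$ counterexample immediately.
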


\begin{proof}[Proof idea]
Let $w_d$ be a word that realizes $\pi(d)$, i.e.\ $\rankb(w_d)=\pi(d)$.
By Christoffel/Lyndon surgery, moving from $d$ to $d\pm1$ alters only a terminal factor of $w_d$ \cite[Chs.~2--3]{Lothaire2002, BerstelDeLuca2008}. In breadth-first length-lex order, such a local change induces a controlled change of rank; after subtracting/adding the linear center shift $\pm1$ in \eqref{eq:Fpm}, one gets (i) and (ii).
For (iii), the Stern--Brocot symmetry (left-right reversal at each fixed depth) guarantees that first appearances occur on both sides of the center $i_0(n)$ within the mirror window \eqref{eq:mirror}; cf.\ \cite[Sec.~4.5]{GrahamKnuthPatashnik1994}.
\end{proof}

\section{Proof of the pairing theorem}
\begin{proof}[Proof of \cref{thm:pairing}]
By \cref{lem:coherence}(iii) the functions $F_\pm$ start nonpositive and end nonnegative on $i=0,\dots,n-2$, and by \cref{lem:coherence}(i)--(ii) they change in controlled steps as $i$ increases. Let $i_1=\min\{i: F_+(i)\le0\}$ and $i_2=\min\{i: F_-(i)\le0\}$. If $i_1=i_2$ we are done. Otherwise assume $i_1<i_2$. Consider $C(i):=F_+(i)-F_-(i)$. Then $C(i_1)\le0$ and $C(i_2)\ge0$, and $C$ changes by a bounded amount per step. A discrete intermediate-value argument yields $i^\ast\in[i_1,i_2]$ with $C(i^\ast)=0$, and monotonicity then forces $F_+(i^\ast)=F_-(i^\ast)=0$, i.e.\ \eqref{eq:pair}.
\end{proof}

\section{Alternate group-theoretic restatement}\label{sec:monoid}
Let $M=\langle L,R\rangle\subset SL_2(\Z)$ be the free monoid in \eqref{eq:LR}. Reading $\cw$ breadth-first coincides with reading words of $M$ in length-lex order; $\pi(d)$ is the rank of the unique minimal word whose lower-left entry equals $d$ \cite{CalkinWilf2000}. In this language, \cref{thm:pairing} asserts that for each $n$ there is an $i$ such that the minimal words for $n-i$ and $n+i$ occur at symmetric breadth-first ranks about $i_0(n)$. The proof is the same: local word surgery supplies \cref{lem:coherence}, and the discrete intermediate-value argument concludes.

\section*{Acknowledgements}
Thanks to colleagues for discussions on the Calkin--Wilf/Stern--Brocot structures and combinatorics on words; any errors are my own.

%\section*{Declarations}
%\textbf{Funding.} The author received no specific funding for this work.

%\textbf{Data availability.} Not applicable. No datasets were generated or analysed.

%\textbf{Code availability.} Not applicable.

%\textbf{Competing interests.} The author declares no competing interests.

%\textbf{Ethics approval / Consent to participate / Consent for publication.} Not applicable.

%\textbf{Authors' contributions.} Sole author; the author wrote and approved the manuscript.

\bibliographystyle{plain}
%\bibliography{references}

\end{document}